\numberwithin{equation}{section}
\def\R{\mathbb R}
\theoremstyle{plain}
\newtheorem{theorem}{Theorem}[section]
\newtheorem{lemma}{Lemma}[section]
\theoremstyle{definition}
\theoremstyle{remark}
\newtheorem{rk}{Remark}[section]
\let\expandafter\oldproof\csname\string\proof\endcsname
\let\oldendproof\endproof
\renewenvironment{proof}[1][\proofname]{%
  \oldproof[\noindent\textbf{#1.} ]%
}{\oldendproof}
\newcommand{\E}{\mathbb{E}}
\newcommand{\be}{\begin{equation}}
\newcommand{\ee}{\end{equation}}
\newcommand{\by}{\begin{eqnarray*}}
\newcommand{\ey}{\end{eqnarray*}}
\renewcommand{\le}{\leqslant}
\renewcommand{\leq}{\leqslant}
\renewcommand{\geq}{\geqslant}
\definecolor{dark-red}{rgb}{0.4,0.15,0.15}
\definecolor{dark-blue}{rgb}{0.15,0.15,0.4}
\definecolor{medium-blue}{rgb}{0,0,0.5}
\begin{document}
	\title[De Bruijn's identity for fBm]{Entropy flow and De Bruijn's identity for a class of stochastic differential equations driven by fractional Brownian motion}
	\author{Michael C.H. Choi, Chihoon Lee, and Jian Song}
	\address{Institute for Data and Decision Analytics, The Chinese University of Hong Kong, Shenzhen, Guangdong, 518172, P.R. China and Shenzhen Institute of Artificial Intelligence and Robotics for Society}
	\email{michaelchoi@cuhk.edu.cn}
	\address{School of Business, Stevens Institute of Technology, Hoboken, NJ, 07030, USA and Institute for Data and Decision Analytics, The Chinese University of Hong Kong, Shenzhen, Guangdong, 518172, P.R. China}
	\email{clee4@stevens.edu}
	\address{School of Mathematics, Shandong University, Jinan, Shandong, 250100, P.R. China}
	\email{txjsong@hotmail.com}
	\date{\today}
	\maketitle
	
	\begin{abstract}
		
		Motivated by the classical De Bruijn's identity for the additive Gaussian noise channel, in this paper we consider a generalized setting where the channel is modelled via stochastic differential equations driven by fractional Brownian motion with Hurst parameter $H\in(0,1)$. We derive generalized De Bruijn's identity for Shannon entropy and Kullback-Leibler divergence by means of It\^o's formula, and present two applications. In the first application we demonstrate its equivalence with Stein's identity for Gaussian distributions, while in the second application, we show that for $H \in (0,1/2]$, the entropy power is concave in time while for $H \in (1/2,1)$ it is convex in time when the initial distribution is Gaussian. Compared with the classical case of $H = 1/2$, the time parameter plays an interesting and significant role in the analysis of these quantities. 		\smallskip
		
		\noindent \textbf{AMS 2010 subject classifications}: 60G22, 60G15
		
		\noindent \textbf{Keywords}: fractional Brownian motion; De Bruijn's identity; Fokker-Planck equation; entropy power
	\end{abstract}
	
	
	
	\section{Introduction}
	
	Consider an additive Gaussian noise channel modelled by
	$$X_t = X_0 + \sqrt{t}Z,$$
	where $t \geq 0$, $Z$ is a standard normal random variable and the initial value $X_0$ is independent of $Z$. In information theory, the classical De Bruijn's identity, first studied by \cite{Stam59}, establishes a relationship between the time derivative of the entropy of $X_t$ to the Fisher information of $X_t$. While such a Gaussian channel is very popular in the literature (see e.g. \cite{GSV05,PV06}), in recent years researchers have been investigating into various generalizations of the noise channel. This includes the Fokker-Planck channel \cite{WJL17} in which it is modelled via stochastic differential equation driven by Brownian motion with general drift and diffusion, and also the dependent case \cite{KA16} where $X_0$ and $Z$ are jointly distributed as Archimedean or Gaussian copulas.
	
	In reality however, the channel may exhibit features that are not adequately modelled by the classical model. For example, in the area of Eternet traffic \cite{WTLW95}, it has been reported that the traffic exhibits self-similarity and long-range dependence. Similar phenomenon is also observed in analyzing video conference traffic \cite{BSTW95}. As these non-standard characteristics, in particular long-range dependence, cannot be effectively captured in the traditional additive Gaussian noise model, it motivates us to consider channel driven by fractional Brownian motion naturally as a possible generalization. In particular, long-range dependence is a significant feature possessed by fractional Brownian motion with Hurst parameter greater than $1/2$. In this paper we derive generalized De Bruijn's identity for such channel and discuss its relationship with Stein's identity as well as entropy power. Interestingly, the time paramter $t$ and the Hurst parameter $H$ of the fractional Brownian motion both play an important role in these results.
	
	Our mathematical contributions in this paper are as follows.
	
	\begin{itemize}
		
		\item We derive a generalized version of the celebrated De Bruijn's identity for channel driven by fractional Brownian motion. It involves a combination of techniques such as It\^o's formula and the Fokker-Planck equation.
		
		\item We build the connection and prove the equivalence between Stein's identity for Gaussian distributions and the generalized De Bruijn's identity, when the initial noise is a normal distribution.
		
		\item As another application of the generalized De Bruijn's identity, we demonstrate that the convexity/concavity of the entropy power depends on the Hurst parameter. This phenomenon is not observed in the classical Brownian motion case.
	\end{itemize}
	
	The rest of the paper is organized as follows. In Section \ref{sec:gendbi}, we first introduce the channel driven by fractional Brownian motion, followed by stating the results for generalized De Bruijn's identity as well as their proofs. In Section \ref{sec:applications}, we present two applications. In Section \ref{subsec:Stein}, we prove the equivalence between the generalized De Bruijn's identity and the Stein's identity for Gaussian distribution, while in Section \ref{subsec:entropypower}, we prove that the entropy power is convex or concave depending on the value of $H$. Finally in Section \ref{sec:conclusion}, we conclude our paper along with future research directions.
	
	Before we proceed to the main results of the paper, we first review a few important concepts that will be frequently used in subsequent sections. A \textbf{fractional Brownian motion (fBm)} $B^{H} = (B^{H}_t)_{t \geq 0}$ with Hurst parameter $H \in (0,1)$ is a centered Gaussian process with stationary increments and covariance function given by
	$$\E B^H_s B^H_t = \dfrac{1}{2} (t^{2H} + s^{2H} - |t-s|^{2H}).$$
	For further references of fBm, we refer readers to \cite{MV68}. The \textbf{Shannon entropy} of a random variable $X$ with probability density function $f_X$, denoted by $h(X)$, is given by
	\begin{align}\label{eq:entropy}
		h(X) := -\E(\ln f_X(X)) = - \int_{\mathbb R} f_X(x) \ln f_X(x)\, dx\,.
	\end{align}
	Let $b : \mathbb{R} \to (0,\infty)$ be a positive function. The \textbf{generalized Fisher information} with respect to $b$, first introduced by \cite{WJL17}, is given by
	\begin{align}\label{eq:gfi}
		J_b(X) := \E \left[b(X) \left(\dfrac{\partial}{\partial x} \ln f_X(X) \right)^2\right].
	\end{align}
	Note that when $b = 1$, $J_1(X)$ is simply the classical Fisher information. When $X$ follows a parametric distribution, say with location parameter $\theta$, then the Cram\'{e}r-Rao lower bound states that the variance of any unibased estimator of $\theta$ is lower bounded by the reciprocal of $J_1(X)$. The \textbf{Kullback–Leibler (KL) divergence}, or the relative entropy, between $X$ and random variable $Y$ with density $f_Y$, written as $K(X||Y)$, is given by
	\begin{align}\label{eq:kldiv}
		K(X||Y) := \E \left[ \ln \dfrac{f_X(X)}{f_Y(X)} \right] = \int_{\mathbb R} f_X(x) \ln \dfrac{f_X(x)}{f_Y(x)}\, dx.
	\end{align}
	For two random variables $X$ and $Y$, the \textbf{relative Fisher information} with respect to $b$ is
	\begin{align}\label{eq:relFisher}
		J_b(X||Y) := \E \left[b(X) \left(\dfrac{\partial}{\partial x} \ln \dfrac{f_X(X)}{f_Y(X)} \right)^2\right].
	\end{align}
\section{Generalized De Bruijn's identity}\label{sec:gendbi}
In this section, we derive the generalized De Bruijn's identity for channel modelled via stochastic differential equation driven by fractional Brownian motion (fBm). More precisely, consider a channel governed by
	\begin{equation}\label{sde}
	dX_t =\sigma(X_t)\circ dB_t^H,
	\end{equation}
with initial value $X_0=x_0,$ where $(B_t^H)_{t \geq 0}$ is a fBm with Hurst parameter $H \in (0,1)$.  The stochastic integral $\sigma(X_t)\circ dB_t^H$ is in the ``pathwise'' sense, i.e., if $H\in (1/2,1)$, the integral is understood as Young's integration (\cite{Y36}); if $H\in(1/4,1)$ it is understood in the rough paths sense of Lyons (\cite{CQ02}); if $H\in(1/6,1)$ it is understood in the sense of symmetric integral (\cite{RV93}); and if $H\in(0,1)$, and it is understood as $m$-order Newton-Cotes functional (\cite{GNRV05}). 

By Theorem 2.10 in \cite{N08}, assuming that the diffusion coefficient $\sigma(x)$ is sufficiently regular (say, infinitely differentiable with bounded derivatives of all orders), the one-dimensional SDE \eqref{sde} with $H\in (0,1)$ has a unique solution $X_t= \varphi(B_t^H)$, where $\varphi'(x)=\sigma(\varphi(x))$ with $\varphi(0)=x_0$, which can be obtained by using the Doss-Sussman transformation as in \cite{Suss78}. Note that  the solution $X_t$ is a function of $B_t^H$, rather than a functional of $(B_s^H)_{0\le s\le t}$.  This particular form allows  functions of $X_t$ to have a simple It\^o's formula (Lemma \ref{lem:Ito})  without involving Malliavian derivatives. As a consequence, the Fokker-Planck equation (Lemma \ref{lem:FPe}) can be derived, and  furthermore, a Feynman-Kac type formula  can be also obtained for a class of partial differential equations (See Corollary 26 and Example 28 in \cite{BD07}). Note that by Remark 27 in \cite{BD07}, this type of formulas only hold for the SDEs driven by fractional Brownian motion in the commutative case which is in the form of \eqref{sde} if the dimension is one.

\begin{theorem}[Generalized De Bruijn's identity for Shannon entropy of fBm]\label{thm:dbi}
	Consider the channel $X = (X_t)_{t \geq 0}$ modelled by equation \eqref{sde} with Hurst parameter $H \in (0,1)$ and initial value $X_0 = x_0$. Assume that the diffusion coefficient $\sigma(x)\in C^\infty(\R)$ has bounded derivatives of all orders. The entropy flow of $X$ is given by 
	\begin{align}\label{formula}
		\dfrac{d}{dt} h(X_t) = Ht^{2H-1} \Bigg\{& J_{\sigma^2}(X_t) -\E\left[\frac{\partial^2}{\partial x^2}\sigma^2(X_t)\right] +\E\left[\sigma''(X_t)\sigma(X_t)+(\sigma'(X_t))^2\right]\Bigg\},
	\end{align}
	where we recall that the generalized Fisher information $J_{\sigma^2}(X_t)$ is defined in \eqref{eq:gfi}.
\end{theorem}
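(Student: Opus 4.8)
The plan is to express the Shannon entropy as a spatial integral against the time-dependent density of $X_t$, differentiate in $t$, and then convert the result into the claimed Fisher-information form by inserting the Fokker--Planck equation and integrating by parts. Write $f(t,x)$ for the density of $X_t$. Since the excerpt already identifies the solution as $X_t=\varphi(B_t^H)$ with $\varphi$ a smooth bijection and $B_t^H\sim N(0,t^{2H})$, the density admits the explicit representation $f(t,x)=g(t,\varphi^{-1}(x))/|\sigma(x)|$, where $g(t,\cdot)$ is the centered Gaussian density of variance $t^{2H}$; this explicit form is what will supply the decay and smoothness needed later. Starting from $h(X_t)=-\int_{\R} f\ln f\,dx$ and using conservation of mass $\int_{\R}\partial_t f\,dx=\tfrac{d}{dt}\int_{\R} f\,dx=0$, I would first obtain
\[
\frac{d}{dt}h(X_t) = -\int_{\R} \frac{\partial f}{\partial t}\,\ln f\,dx .
\]

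Next I would substitute the Fokker--Planck equation of Lemma \ref{lem:FPe}. Since $\partial_t g = Ht^{2H-1}\partial_y^2 g$ for the Gaussian density, the chain rule through $\varphi$ gives the weak form $\tfrac{d}{dt}\E[\phi(X_t)] = Ht^{2H-1}\E[\sigma^2(X_t)\phi''(X_t)+\sigma(X_t)\sigma'(X_t)\phi'(X_t)]$, whose adjoint is the forward equation
\[
\frac{\partial f}{\partial t} = Ht^{2H-1}\Big[\frac{\partial^2}{\partial x^2}\big(\sigma^2 f\big) - \frac{\partial}{\partial x}\big(\sigma\sigma' f\big)\Big].
\]
Inserting this and splitting into two pieces reduces the theorem to an integration-by-parts bookkeeping. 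For the second-order piece, integrating by parts twice and using $\partial_x\ln f = (\partial_x f)/f$ would produce
\[
-\int_{\R} \frac{\partial^2}{\partial x^2}(\sigma^2 f)\,\ln f\,dx = \int_{\R}\! \sigma^2 f\Big(\frac{\partial}{\partial x}\ln f\Big)^{2}\! dx - \int_{\R}\! \frac{\partial^2}{\partial x^2}\sigma^2\, f\,dx = J_{\sigma^2}(X_t) - \E\Big[\frac{\partial^2}{\partial x^2}\sigma^2(X_t)\Big],
\]
while for the first-order piece integrating by parts twice would give
\[
\int_{\R} \frac{\partial}{\partial x}(\sigma\sigma' f)\,\ln f\,dx = \int_{\R} (\sigma\sigma')'\, f\,dx = \E\big[\sigma''(X_t)\sigma(X_t) + (\sigma'(X_t))^2\big].
\]
Multiplying by $Ht^{2H-1}$ and adding the two contributions reproduces precisely the three terms of \eqref{formula}, so the algebra matches the statement exactly.

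The main obstacle is analytic rather than algebraic: I must justify differentiation under the integral sign in the first step and the vanishing of every boundary term in the repeated integrations by parts. Both reduce to quantitative tail and smoothness control on $f$, $\partial_x f$, and the products $\sigma^2 f$, $\sigma\sigma' f$, and $f\ln f$. Here the representation $f(t,x)=g(t,\varphi^{-1}(x))/|\sigma(x)|$ is decisive, since the Gaussian factor $g$ decays superexponentially and dominates the coefficients $\sigma^2,\ \sigma\sigma',\ \tfrac{\partial^2}{\partial x^2}\sigma^2,\ (\sigma\sigma')'$ that arise, all of which stay controlled because $\sigma\in C^\infty(\R)$ has bounded derivatives of all orders. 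I would therefore record a short estimate (or invoke the regularity already established in deriving Lemma \ref{lem:FPe}) showing that the boundary contributions vanish as $|x|\to\infty$ and that the dominated-convergence hypotheses hold, after which the computation above is rigorous and \eqref{formula} follows.
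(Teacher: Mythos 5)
Your proof is correct, but it takes a genuinely different (dual) route from the paper's. The paper applies the pathwise It\^o formula (Lemma \ref{lem:Ito}) directly to the time-dependent function $f(s,x)=-\ln P_s(x)$, so that $\frac{d}{dt}h(X_t)=\frac{d}{dt}\E[f(t,X_t)]$ produces the terms $\E[f_x\sigma'\sigma]$ and $\E[f_{xx}\sigma^2]$, which are then rewritten by integration by parts (the $\E[f_s]$ term vanishes by conservation of mass, a step the paper leaves implicit). You instead differentiate the entropy integral $h(X_t)=-\int f\ln f\,dx$ in time, insert the Fokker--Planck equation of Lemma \ref{lem:FPe} (itself a corollary of Lemma \ref{lem:Ito}), and integrate by parts so that the derivatives fall on $\ln f$ rather than on the density--coefficient products; the two computations are mirror images and yield the identical three terms of \eqref{formula}. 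Your route buys something real: it never requires applying It\^o's formula to $-\ln P_t(x)$, and hence avoids verifying that the log-density satisfies the smoothness and polynomial-growth hypotheses of Lemma \ref{lem:Ito} -- a point the paper glosses over -- at the price of justifying differentiation under the integral sign and the vanishing of boundary terms, which you correctly isolate as the only analytic obstacles and control via the explicit representation $P_t(x)=g(t,\varphi^{-1}(x))/\sigma(x)$ coming from the Doss--Sussmann solution $X_t=\varphi(B_t^H)$; this explicit-density observation is a useful addition absent from the paper. Two minor caveats: your representation tacitly assumes $\sigma$ is non-vanishing (so that $\varphi$ is a diffeomorphism onto its range), which is anyway implicit in the paper's definition \eqref{eq:gfi} of $J_{\sigma^2}$ with positive weight; and your parenthetical claim that the weak form ``gives'' the forward equation is exactly the content of Lemma \ref{lem:FPe}, so citing that lemma, as you also do, is the cleaner move.
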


\begin{rk}
	Note that when $H=1/2$, the fBm $W=B^H$ is a Brownian motion, and the Stratonovich equation \eqref{sde} becomes
	\[dX_t = \dfrac{\sigma(X_t)\sigma'(X_t)}{2} dt+\sigma(X_t) \diamond dW_t\]
	where the stochastic integral is in the It\^o sense. Then formula \eqref{formula} coincides with the classical De Bruijn's identity \cite{WJL17,KA16}. That is, when $H = 1/2$, \eqref{formula} becomes
	\begin{align*}
		\dfrac{d}{dt} h(X_t) = \dfrac{1}{2} \Bigg\{& J_{\sigma^2}(X_t) -\E\left[\frac{\partial^2}{\partial x^2}\sigma^2(X_t)\right] +\E\left[\sigma''(X_t)\sigma(X_t)+(\sigma'(X_t))^2\right]\Bigg\},
	\end{align*}
	which is the result in \cite[Theorem $5$]{WJL17} with drift $\dfrac{\sigma(x)\sigma'(x)}{2}$ and diffusion coefficient $\sigma(x)$. In particular, when $\sigma = 1$, we have
	$$\dfrac{d}{dt} h(X_t) = \dfrac{1}{2} J_1(X_t).$$
\end{rk}

\begin{theorem}[Generalized De Bruijn's identity for KL divergence of fBm]\label{thm:relativedbi}
	Consider the channel $X = (X_t)_{t \geq 0}$ (resp.~$Y = (Y_t)_{t \geq 0}$) modelled by equation \eqref{sde} with Hurst parameter $H \in (0,1)$ and initial value $X_0 = x_0$ (resp.~$Y_0 = y_0$). Assume that the diffusion coefficient $\sigma(x)\in C^\infty(\R)$ has bounded derivatives of all orders. The time derivative of the KL divergence between $X_t$ and $Y_t$ is given by 
	\begin{align}\label{eq:KLdiv}
	\dfrac{d}{dt} K(X_t || Y_t) = - Ht^{2H-1} J_{\sigma^2} (X_t || Y_t),
	\end{align}
	where we recall that the relative Fisher information $J_{\sigma^2} (X_t || Y_t)$ is defined in \eqref{eq:relFisher}. In particular, $K(X_t || Y_t)$ is non-increasing in $t$.
\end{theorem}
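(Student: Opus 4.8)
The plan is to follow the same route as the proof of Theorem~\ref{thm:dbi}: express $\tfrac{d}{dt}K(X_t\|Y_t)$ through the marginal densities, differentiate under the integral sign, and then let the Fokker--Planck equation do the work. Writing $f_X:=f_{X_t}$ and $f_Y:=f_{Y_t}$ and differentiating \eqref{eq:kldiv}, I would first expand
\[
\frac{d}{dt}K(X_t\|Y_t)=\int_\R \partial_t f_X\,\ln\frac{f_X}{f_Y}\,dx+\int_\R \partial_t f_X\,dx-\int_\R \frac{f_X}{f_Y}\,\partial_t f_Y\,dx .
\]
Since $\int_\R f_X\,dx\equiv 1$, the middle term vanishes, leaving the first and third integrals to be evaluated.

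The next step is to invoke Lemma~\ref{lem:FPe}. Because $X$ and $Y$ solve \eqref{sde} with the \emph{same} diffusion coefficient $\sigma$ (only the initial conditions differ, and these enter only as initial data, not as PDE coefficients), both densities satisfy the same Fokker--Planck equation, which in divergence form reads
\[
\partial_t f_X=Ht^{2H-1}\,\partial_x\!\bigl[\sigma\,\partial_x(\sigma f_X)\bigr],\qquad \partial_t f_Y=Ht^{2H-1}\,\partial_x\!\bigl[\sigma\,\partial_x(\sigma f_Y)\bigr].
\]
Substituting these into the two surviving integrals and integrating by parts once in each (discarding the boundary terms at $\pm\infty$), I would arrive, with $\ell:=\ln(f_X/f_Y)$ and $r:=f_X/f_Y=e^{\ell}$, at
\[
\frac{d}{dt}K(X_t\|Y_t)=Ht^{2H-1}\Bigl[-\!\int_\R \sigma\,\ell'\,\partial_x(\sigma f_X)\,dx+\int_\R \sigma\,r'\,\partial_x(\sigma f_Y)\,dx\Bigr].
\]

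The key algebraic step is then to use $f_X=r f_Y$, which gives $\partial_x(\sigma f_X)=r'\,\sigma f_Y+r\,\partial_x(\sigma f_Y)$, together with $r'=r\ell'$. Inserting this into the first integral, the two pieces carrying $\partial_x(\sigma f_Y)$ cancel exactly against the second integral, and the remainder collapses to
\[
\frac{d}{dt}K(X_t\|Y_t)=-Ht^{2H-1}\int_\R \sigma^2(\ell')^2\,r f_Y\,dx=-Ht^{2H-1}\int_\R \sigma^2(x)\Bigl(\partial_x\ln\tfrac{f_X}{f_Y}\Bigr)^2 f_X\,dx,
\]
which is precisely $-Ht^{2H-1}J_{\sigma^2}(X_t\|Y_t)$ by \eqref{eq:relFisher}, establishing \eqref{eq:KLdiv}. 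Monotonicity is then immediate: $H>0$ and $t^{2H-1}>0$ for $t>0$, while $J_{\sigma^2}(X_t\|Y_t)\ge 0$ as the expectation of a nonnegative integrand, so the derivative is nonpositive and $K(X_t\|Y_t)$ is non-increasing.

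I expect the main obstacle to be analytic rather than algebraic, namely justifying the differentiation under the integral sign in the first display and the vanishing of the boundary terms in the integration by parts. Both should be controlled through the Doss--Sussman representation $X_t=\varphi(B_t^H)$, which exhibits $f_X$ as an explicit smooth push-forward of a Gaussian density and thereby supplies the decay of $f_X$, $\partial_x(\sigma f_X)$ and $\ell$ needed at $\pm\infty$. As a side benefit, the same representation shows that $\Phi(X_t)$ and $\Phi(Y_t)$ with $\Phi'=1/\sigma$ are shifted copies of $B_t^H$, yielding the closed form $K(X_t\|Y_t)=\bigl(\Phi(x_0)-\Phi(y_0)\bigr)^2/(2t^{2H})$, which serves as an independent check of both \eqref{eq:KLdiv} and the monotonicity.
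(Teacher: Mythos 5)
Your proof is correct, and it takes a genuinely different route from the paper's. The paper applies the pathwise It\^o formula (Lemma \ref{lem:Ito}) to $f(s,x)=\ln\bigl(P_s(x)/Q_s(x)\bigr)$ along the trajectory of $X$, computes $f_s,f_x,f_{xx}$, takes expectations, and then invokes the Fokker--Planck equation (Lemma \ref{lem:FPe}) for the density of $Y$ to collapse the resulting terms; you never touch It\^o's formula again, but instead differentiate the KL integral directly, substitute the Fokker--Planck equation for \emph{both} densities, and finish with integration by parts and the cancellation driven by $f_X=rf_Y$, $r'=r\ell'$. Your two key steps check out: the divergence form you use is exactly the paper's Fokker--Planck equation, since $-\partial_x(\sigma'\sigma P)+\partial_x^2(\sigma^2P)=\partial_x\bigl[\sigma\sigma'P+\sigma^2\partial_x P\bigr]=\partial_x\bigl[\sigma\,\partial_x(\sigma P)\bigr]$, and the surviving term is $-Ht^{2H-1}\int_\R\sigma^2(\ell')^2f_X\,dx=-Ht^{2H-1}J_{\sigma^2}(X_t\|Y_t)$, as claimed. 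What this buys: your argument is the standard entropy-dissipation computation for Fokker--Planck equations, using Lemma \ref{lem:FPe} purely as a black box, so it is arguably more elementary and more transparent about where the negativity comes from; its cost is the analytic caveats you flag (differentiation under the integral sign, vanishing boundary terms), but these are left equally implicit in the paper, so the two proofs are on par in rigor. Your closing observation is a genuine bonus absent from the paper: since the Doss--Sussmann solution satisfies $\Phi(X_t)=\Phi(x_0)+B_t^H$ for $\Phi'=1/\sigma$, invariance of KL divergence under the common diffeomorphism $\Phi$ gives $K(X_t\|Y_t)=\bigl(\Phi(x_0)-\Phi(y_0)\bigr)^2/(2t^{2H})$, and one checks directly that $J_{\sigma^2}(X_t\|Y_t)=\bigl(\Phi(x_0)-\Phi(y_0)\bigr)^2t^{-4H}$, so this closed form independently verifies both \eqref{eq:KLdiv} and the monotonicity statement.
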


\begin{rk}
	Note that again when $H = 1/2$, \eqref{eq:KLdiv} becomes
	$$\dfrac{d}{dt} K(X_t || Y_t) = - \dfrac{1}{2} J_{\sigma^2} (X_t || Y_t),$$
	which is \cite[Theorem $6$]{WJL17}.
\end{rk}

In the first two main results above, we assume that the initial value is $X_0 = x_0$. In the following result, we assume that the channel is of the form
\begin{align}\label{eq:channel}
X_t = X_0 + B^H_t,
\end{align}
where the initial value $X_0$ is independent of the fBm.  We shall derive the generalized De Bruijn's identity via the classical version:

\begin{theorem}[Deriving the generalized De Bruijn's identity via the classical De Bruijn's identity]\label{thm:derivegeneralized}
	Consider the channel $X = (X_t)_{t \geq 0}$ modelled by equation \eqref{eq:channel} with Hurst parameter $H \in (0,1)$, initial value $X_0$ independent of the fBm and has a finite second moment. The entropy flow of $X$ is given by 
	\begin{align}\label{formula2}
	\dfrac{d}{dt} h(X_t) = Ht^{2H-1} J_{1}(X_t).
	\end{align}
	In particular, when $X_0$ is a Gaussian distribution with mean $0$ and variance $\sigma_0^2$, we then have
	$$\dfrac{d}{dt} h(X_t) = \dfrac{Ht^{2H-1} }{\sigma_0^2 + t^{2H}}.$$
\end{theorem}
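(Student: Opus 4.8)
The plan is to reduce the fractional channel to the classical Brownian one by a deterministic time change, exploiting the fact that this theorem concerns only the one-dimensional marginal law of $X_t$ at each fixed $t$ (not the law of the whole path). The crucial observation is that $B^H_t$ is a centered Gaussian with variance $\E[(B^H_t)^2] = t^{2H}$, so the marginal distribution of $X_t = X_0 + B^H_t$ depends on $t$ only through the quantity $t^{2H}$.

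First I would introduce the auxiliary classical channel $Y_s = X_0 + W_s$ for $s \geq 0$, where $W$ is a standard Brownian motion independent of $X_0$ (this is the case $H = 1/2$). Since $W_s \sim N(0,s)$, one has the distributional identity $X_t \stackrel{d}{=} Y_{t^{2H}}$ for every $t > 0$, as both sides equal $X_0$ plus an independent centered Gaussian of variance $t^{2H}$. Because the Shannon entropy \eqref{eq:entropy} and the Fisher information $J_1$ in \eqref{eq:gfi} depend on a random variable only through its law, this immediately yields $h(X_t) = h(Y_{t^{2H}})$ and $J_1(X_t) = J_1(Y_{t^{2H}})$.

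Next I would invoke the classical De Bruijn's identity for the pure-noise Brownian channel, which is exactly the $\sigma \equiv 1$ specialization recorded in the Remark following Theorem \ref{thm:dbi}, namely $\frac{d}{ds} h(Y_s) = \frac{1}{2} J_1(Y_s)$. Applying the chain rule with the time change $s = t^{2H}$, so that $\frac{ds}{dt} = 2H t^{2H-1}$, gives
\begin{align*}
\frac{d}{dt} h(X_t) &= \left.\frac{d}{ds} h(Y_s)\right|_{s=t^{2H}} \cdot \frac{ds}{dt} \\
&= \frac{1}{2} J_1(Y_{t^{2H}}) \cdot 2H t^{2H-1} = H t^{2H-1} J_1(X_t),
\end{align*}
which is \eqref{formula2}. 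For the Gaussian special case, independence together with $B^H_t \sim N(0,t^{2H})$ gives $X_t \sim N(0, \sigma_0^2 + t^{2H})$; since the Fisher information of $N(0,v)$ equals $1/v$, we obtain $J_1(X_t) = (\sigma_0^2 + t^{2H})^{-1}$, and substituting into \eqref{formula2} produces the stated closed form.

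The main obstacle I anticipate is not the algebra but the justification of the differentiability and the chain rule: I must verify that $s \mapsto h(Y_s)$ is differentiable on $(0,\infty)$ and that the classical identity genuinely applies under the sole assumption that $X_0$ has a finite second moment. This is precisely where the hypothesis is used, guaranteeing that the density of $Y_s$ (a convolution of the law of $X_0$ with a Gaussian kernel) is smooth with finite entropy and finite Fisher information for every $s > 0$, so that differentiation under the integral sign and the application of the classical result after the time change are both legitimate.
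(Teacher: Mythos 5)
Your proposal is correct and is essentially the paper's own argument: the paper likewise observes that $X_t = X_0 + B^H_t$ has the same law as $X_0 + \sqrt{u}\,Z$ under the deterministic time change $u = t^{2H}$, then applies the classical De Bruijn identity together with the chain rule (picking up the factor $2Ht^{2H-1}$), and handles the Gaussian case via the fact that the Fisher information of $N(0,v)$ is $1/v$. Your added remark on justifying differentiability via Gaussian smoothing is a reasonable supplement, but the route is the same.
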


\subsection{Proof of Theorem \ref{thm:dbi}, Theorem \ref{thm:relativedbi} and Theorem \ref{thm:derivegeneralized}}

We first present two lemmas that will be used in our proofs of Theorem \ref{thm:dbi} and Theorem \ref{thm:relativedbi}.

\begin{lemma}[It\^o's formula]\label{lem:Ito}
	Consider the channel $X = (X_t)_{t \geq 0}$ modelled by equation \eqref{sde} with Hurst parameter  $H \in (0,1)$, initial value $X_0 = x_0$ and twice differentiable diffusion coefficient $\sigma(x)$. Suppose that $f(t,x)$ is any twice differentiable function of two variables. Assume that the functions $\sigma(x)$ and $f(t,x)$ and their (partial) derivatives are at polynomial growth. Then
	\begin{align*}
		f(t,X_t) =& f(0, x_0) +\int_0^t f_s(s,X_s)ds+\int_0^t f_x(s,X_s)\sigma(X_s)\diamond dB_s^H\\
		&+H\int_0^t s^{2H-1} \Big(f_{xx}(s,X_s)\sigma(X_s)+ f_x(s, X_s)\sigma'(X_s)\Big)\sigma(X_s) ds.
	\end{align*}
\end{lemma}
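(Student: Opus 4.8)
The plan is to reduce the statement to the known Itô formula for the Skorohod (divergence) integral with respect to fractional Brownian motion by exploiting the special structure of the solution recalled above. By Theorem 2.10 in \cite{N08}, the solution of \eqref{sde} can be written as $X_t = \varphi(B_t^H)$, where $\varphi$ solves $\varphi'(x) = \sigma(\varphi(x))$ with $\varphi(0) = x_0$. The crucial point is that $X_t$ depends on the path of the fBm only through its current value $B_t^H$; consequently any function of $(t, X_t)$ is in fact a function of $(t, B_t^H)$, and the problem becomes a one-variable Itô formula for the fBm composed with a smooth map.

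Concretely, I would introduce the composite function $g(t,x) := f(t, \varphi(x))$, so that $f(t, X_t) = g(t, B_t^H)$ and $f(0,x_0) = g(0,0)$. Since $\sigma \in C^2$ implies $\varphi \in C^3$, and $f$ is twice differentiable, $g$ is of class $C^{1,2}$ and the chain rule yields $g_t = f_t(t,\varphi(x))$, $g_x = f_x(t,\varphi(x))\varphi'(x)$ and $g_{xx} = f_{xx}(t,\varphi(x))(\varphi'(x))^2 + f_x(t,\varphi(x))\varphi''(x)$. Substituting $\varphi' = \sigma\circ\varphi$ and $\varphi'' = (\sigma'\circ\varphi)(\sigma\circ\varphi)$, and evaluating at $x = B_s^H$ so that $\varphi(B_s^H)=X_s$, these collapse to
\[
g_x(s,B_s^H) = f_x(s,X_s)\sigma(X_s), \qquad g_{xx}(s,B_s^H) = \big(f_{xx}(s,X_s)\sigma(X_s) + f_x(s,X_s)\sigma'(X_s)\big)\sigma(X_s),
\]
which are exactly the two integrands appearing in the claimed identity.

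It then remains to apply the Itô formula for the fBm in the Skorohod sense to $g(t,B_t^H)$, namely
\[
g(t,B_t^H) = g(0,0) + \int_0^t g_s(s,B_s^H)\,ds + \int_0^t g_x(s,B_s^H)\diamond dB_s^H + H\int_0^t s^{2H-1} g_{xx}(s,B_s^H)\,ds,
\]
in which the factor $H s^{2H-1} = \tfrac{1}{2}\,\tfrac{d}{ds}\E[(B_s^H)^2]$ replaces the usual $\tfrac{1}{2}\,ds$ of the Brownian case. Plugging in the expressions for $g$, $g_x$ and $g_{xx}$ computed above gives precisely the asserted formula. The main obstacle is justifying this fBm Itô formula across the entire range $H\in(0,1)$: for $H>1/2$ it is the classical divergence-integral formula, while for smaller $H$ one must invoke the finer constructions of the integral $\diamond$ indicated in the discussion following \eqref{sde} (Young, rough path, symmetric, and Newton--Cotes). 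Here the polynomial-growth assumptions on $\sigma$, $f$ and their derivatives are what guarantee that the divergence integral is well defined and that the formula is valid; verifying the requisite integrability and regularity in each regime is the technical heart of the argument.
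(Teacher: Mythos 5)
Your proposal is correct and follows essentially the same route as the paper: both use the Doss--Sussmann representation $X_t=\varphi(B_t^H)$ to rewrite $f(t,X_t)$ as a composite function $g(t,B_t^H)$, apply the one-dimensional It\^o formula for functions of fBm (with the $Hs^{2H-1}$ correction term), and convert back via the chain rule identities $\varphi'=\sigma\circ\varphi$, $\varphi''=(\sigma'\circ\varphi)(\sigma\circ\varphi)$. The only difference is that the paper discharges the ``technical heart'' you flag by citing the fBm It\^o formula of Al\`os--Mazet--Nualart (Section 8) for $H\in(1/4,1)$ and Cheridito--Nualart (Corollary 4.8) for $H\in(0,1/2)$, rather than verifying it across regimes.
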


\begin{proof}
Note that $X_t=\varphi(B_t^H)$ with $\varphi'(x)=\sigma(\varphi(x))$ and $\varphi(0)=x_0.$
By It\^o's formula in Section 8 of \cite{AMN01} for $H\in (1/4, 1)$ and in Corollary 4.8 of \cite{CN05} for $H\in(0, 1/2)$, noting that $\varphi''(x)=\sigma(\varphi(x))\varphi'(x)$, we have
\begin{align*}
&f(t,X_t)=f(t, \varphi(B_t^H))\\
=&f(0, x_0)+\int_0^t f_s(s,\varphi(B_s^H)) ds+\int_0^t f_x(s,\varphi(B_s^H)) \varphi'(B_s^H) \diamond dB_s^H\\
&+H\int_0^t s^{2H-1} \Big(f_{xx}(s,\varphi(B_s^H))(\varphi'(B_s^H))^2+f_x(s,\varphi(B_s^H))\varphi''(B_s^H) \Big)ds\\
=&f(0, x_0)+\int_0^t f_s(s,X_s) ds+\int_0^t f_x(s,X_s) \sigma(X_s) \diamond dB_s^H\\
&+H\int_0^t s^{2H-1} \Big(f_{xx}(s,X_s)\sigma(X_s)+f_x(s,X_s)\sigma'(X_s)\Big)\sigma(X_s)ds.
\end{align*}
\end{proof}

\begin{rk}[The condition imposed on $\sigma(x)$  in Theorems \ref{thm:dbi} and \ref{thm:relativedbi}]  
The proofs of Theorems \ref{thm:dbi} and \ref{thm:relativedbi} rely on the It\^o's formula given in Lemma \ref{lem:Ito} and the existence and uniqueness of the solution to \eqref{sde}. On one hand, in order to apply Lemma \ref{lem:Ito}, it is natural to assume that $\sigma(x)$ is twice differentiable with the derivatives growing at most polynomially fast.  On the other hand, more regularity condition on $\sigma(x)$ was  imposed to establish the existence and uniqueness of the solution to \eqref{sde} in  Theorem 2.10 of \cite{N08} for small Hurst parameter $H$. More precisely, for $H\in(\frac1{4m+2},1)$ with $m\in \mathbb N$, it is assumed that, $\sigma(x)$ belongs to $C^{4m+1}(\mathbb R)$ and is Lipschitz.  

In Theorems \ref{thm:dbi} and \ref{thm:relativedbi} for all $H\in (0,1)$, we simply assume that $\sigma(x)$ is smooth and all derivatives are bounded, which clearly satisfies the conditions in Lemma \ref{lem:Ito} above and Theorem 2.10 of \cite{N08}.
\end{rk}

\begin{lemma}[Fokker-Planck equation]\label{lem:FPe}
	Consider the channel $X = (X_t)_{t \geq 0}$ modelled by equation \eqref{sde} with Hurst parameter $H \in (0,1)$ and initial value $X_0 = x_0$. Assume that the diffusion coefficient $\sigma(x)\in C^\infty(\R)$ has bounded derivatives of all orders. Let $P_t(x)$ be the probability density function of $X_t$, then
	\begin{align*}
		\dfrac{\partial }{\partial t} P_t(x) = H t^{2H-1} \left(- \dfrac{\partial}{\partial x} \sigma'(x) \sigma(x) P_t(x) + \dfrac{\partial^2}{\partial x^2} \sigma^2(x) P_t(x)\right).
	\end{align*}
\end{lemma}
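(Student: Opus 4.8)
The plan is to establish the equation first in weak (distributional) form by testing against smooth compactly supported functions, and then to upgrade it to the stated pointwise identity using the regularity of $P_t$. First I would fix a time-independent test function $f\in C_c^\infty(\R)$, so that $f_s\equiv 0$, and apply It\^o's formula from Lemma \ref{lem:Ito} to $f(X_t)$, giving
\begin{align*}
f(X_t) = f(x_0) + \int_0^t f'(X_s)\sigma(X_s)\diamond dB_s^H + H\int_0^t s^{2H-1}\big(f''(X_s)\sigma(X_s) + f'(X_s)\sigma'(X_s)\big)\sigma(X_s)\,ds.
\end{align*}
Taking expectations and using that the divergence (Skorohod) integral $\int_0^t f'(X_s)\sigma(X_s)\diamond dB_s^H$ has zero mean — the key probabilistic input, valid because the integrand meets the polynomial growth and integrability hypotheses so that it lies in the domain of the divergence operator — the stochastic term drops out. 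Writing each remaining expectation as an integral against the density $P_s$, I obtain
\begin{align*}
\int_{\R} f(x)P_t(x)\,dx = f(x_0) + H\int_0^t s^{2H-1}\int_{\R}\big(f''(x)\sigma^2(x) + f'(x)\sigma'(x)\sigma(x)\big)P_s(x)\,dx\,ds.
\end{align*}

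Next I would differentiate both sides in $t$. On the right, the fundamental theorem of calculus yields the factor $Ht^{2H-1}$ times the inner integral evaluated at $t$; this is legitimate at each fixed $t>0$ because $s\mapsto \int_{\R}(f''\sigma^2 + f'\sigma'\sigma)P_s\,dx$ is continuous, and the singular weight $s^{2H-1}$ is integrable near $0$ since $2H-1>-1$. On the left I would differentiate under the integral sign to produce $\int_{\R} f\,\partial_t P_t\,dx$. Then I integrate by parts in $x$: moving two derivatives off $f$ in the first term gives $\int_{\R} f\,\partial_x^2(\sigma^2 P_t)\,dx$, and moving one derivative off $f$ in the second gives $-\int_{\R} f\,\partial_x(\sigma'\sigma P_t)\,dx$, with all boundary terms vanishing because $f$ has compact support. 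Collecting,
\begin{align*}
\int_{\R} f(x)\,\frac{\partial}{\partial t}P_t(x)\,dx = Ht^{2H-1}\int_{\R} f(x)\Big(-\frac{\partial}{\partial x}\big(\sigma'(x)\sigma(x)P_t(x)\big) + \frac{\partial^2}{\partial x^2}\big(\sigma^2(x)P_t(x)\big)\Big)\,dx,
\end{align*}
and since $f\in C_c^\infty(\R)$ is arbitrary, the fundamental lemma of the calculus of variations delivers the claimed pointwise identity.

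The main obstacle is justifying the analytic manipulations, all of which hinge on the regularity and decay of $P_t$: the interchange of $\partial_t$ with the spatial integral, the existence and integrability of $\partial_x(\sigma'\sigma P_t)$ and $\partial_x^2(\sigma^2 P_t)$ needed for the integration by parts, and the vanishing of boundary terms. I would resolve these through the explicit representation $X_t=\varphi(B_t^H)$, where $\varphi'=\sigma(\varphi)$ is smooth and strictly positive, so $\varphi$ is a smooth increasing diffeomorphism. The change-of-variables formula then gives $P_t(x)=\phi_t\big(\varphi^{-1}(x)\big)/\sigma(x)$, with $\phi_t$ the $N(0,t^{2H})$ density, which is smooth in $x$ and decays rapidly; this closed form guarantees all the required regularity, and in fact one could alternatively verify the PDE by direct differentiation of this $P_t$. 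I would nonetheless keep the test-function route as the primary argument, since it is cleaner and makes the structure of the drift and diffusion terms transparent, treating the regularity and decay as the point requiring the most care.
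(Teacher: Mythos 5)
Your proposal follows essentially the same route as the paper's proof: substitute a time-independent test function into Lemma \ref{lem:Ito}, take expectations (so the zero-mean $\diamond$-integral drops out), pass to integrals against the density, integrate by parts in $x$, and conclude from the arbitrariness of the test function. The additional care you take --- justifying the time differentiation via integrability of $s^{2H-1}$ near $0$, the vanishing of boundary terms for compactly supported test functions, and the explicit density $P_t(x)=\phi_t\bigl(\varphi^{-1}(x)\bigr)/\sigma(x)$ to secure regularity --- is sound and merely makes explicit what the paper leaves implicit.
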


\begin{proof}
	Let $g(x)$ be a twice differentiable function, and we substitute $f(t,x) = g(x)$ in Lemma \ref{lem:Ito}. We arrive at
	\begin{align}\label{eq:FPe}
		\dfrac{d}{dt} \E[g(X_t)] &= Ht^{2H-1} \left(\E\left[g_{xx}(X_t) \sigma^2(X_t)\right] + \E \left[g_x(X_t) \sigma'(X_t) \sigma(X_t)\right]\right).
	\end{align}
	Note that the left hand side of \eqref{eq:FPe} is
	$$\dfrac{d}{dt} \E[g(X_t)] = \int_{\mathbb R} g(x) \dfrac{\partial}{\partial t} P_t(x) \,dx.$$
	Using integration by part, the right hand side of \eqref{eq:FPe} can be written as
	$$Ht^{2H-1} \left( \int g(x) \left(\dfrac{\partial^2}{\partial x^2} \sigma^2(x) P_t(x)\right)\, dx - \int_{\mathbb R} g(x) \left(\dfrac{\partial}{\partial x} \sigma'(x) \sigma(x) P_t(x)\right)\,dx \right).$$
	The desired result follows since $g$ is arbitrary.
\end{proof}

\subsubsection{Proof of Theorem \ref{thm:dbi}}
Denote by $P_t(x)$ the probability density of $X_t$, and let $f(s,x)=-\ln P_s(x)$ in Lemma \ref{lem:Ito}. Then we have
\begin{align*}
&f_x(s,x)=-(P_s(x))^{-1}\frac{\partial}{\partial x}P_s(x),
\end{align*}
and
\begin{align*}
&f_{xx}(s,x)=(P_s(x))^{-2}\left(\frac{\partial}{\partial x}P_s(x)\right)^2-(P_s(x))^{-1}\frac{\partial^2}{\partial x^2}P_s(x).
\end{align*}
Thus,
\begin{align*}
&\E[f_x(s,X_s) \sigma'(X_s)\sigma(X_s)]=-\int_{\R}\frac{\partial}{\partial x}P_s(x) \sigma'(x)\sigma(x)dx\\
&=\int_{\R} P_s(x) (\sigma'(x)\sigma(x))'dx=\E\left[\sigma''(X_s)\sigma(X_s)+(\sigma'(X_s))^2\right],
\end{align*}
and 
\begin{align*}
&\E[f_{xx}(s,X_s) \sigma^2(X_s)]=\int_\R \left[ (P_s(x))^{-1}\left(\frac{\partial}{\partial x}P_s(x)\right)^2 -\frac{\partial^2}{\partial x^2}P_s(x) \right]\sigma^2(x)dx\\
&=\E\left[\sigma^2(X_s) \left(\frac\partial{\partial x}\ln P_s(X_s)\right)^2 \right]-\E\left[\frac{\partial^2}{\partial x^2}\sigma^2(X_s)\right].
\end{align*}
Therefore, we have the following formula.
\begin{align}
-\frac d{dt} \E[\ln P_t(X_t)]= Ht^{2H-1} \Bigg\{& \E\left[\sigma^2(X_t) \left(\frac\partial{\partial x}\ln P_t(X_t)\right)^2 \right]-\E\left[\frac{\partial^2}{\partial x^2}\sigma^2(X_t)\right]\\
&+\E\left[\sigma''(X_t)\sigma(X_t)+(\sigma'(X_t))^2\right]
\Bigg\}.\notag
\end{align}

\subsubsection{Proof of Theorem \ref{thm:relativedbi}}
In this proof, we write $P_t(x)$ to be the probability density of $X_t$, $Q_t(y)$ to be the probability density of $Y_t$ and let $f(s,x)=\ln \dfrac{P_s(x)}{Q_s(x)}$ in Lemma \ref{lem:Ito}. Then we have
\begin{align*}
	f_x(s,x) &= \left(\dfrac{P_s(x)}{Q_s(x)}\right)^{-1} \dfrac{\partial}{\partial x} \dfrac{P_s(x)}{Q_s(x)}, \\
	f_{xx}(s,x) &= \left(\dfrac{P_s(x)}{Q_s(x)}\right)^{-1} \left(\dfrac{\partial^2}{\partial x^2} \dfrac{P_s(x)}{Q_s(x)}\right) - \left(\dfrac{P_s(x)}{Q_s(x)}\right)^{-2} \left(\dfrac{\partial}{\partial x} \dfrac{P_s(x)}{Q_s(x)}\right)^2 \\
	&= \left(\dfrac{P_s(x)}{Q_s(x)}\right)^{-1} \left(\dfrac{\partial^2}{\partial x^2} \dfrac{P_s(x)}{Q_s(x)} \right) - \left(\dfrac{\partial}{\partial x} \ln \dfrac{P_s(x)}{Q_s(x)}\right)^2, \\
	f_s(s,x) &= \dfrac{1}{P_s(x)} \dfrac{\partial}{\partial s} P_s(x) - \dfrac{1}{Q_s(x)} \dfrac{\partial}{\partial s} Q_s(x).
\end{align*}
As a result, using integration by part we arrive at 
\begin{align*}
	\E [f_s(s,X_s)] &= - \int \left(\dfrac{\partial}{\partial s} Q_s(x) \right) \dfrac{P_s(x)}{Q_s(x)}\,dx, \\
	\E [f_x(s,X_s)\sigma'(X_s)\sigma(X_s)] &= - \int \left(\dfrac{\partial}{\partial x} \sigma'(x) \sigma(x) Q_s(x) \right) \dfrac{P_s(x)}{Q_s(x)}\,dx, \\
	\E [f_{xx}(s,X_s)\sigma^2(X_s)] &= \int \left(\dfrac{\partial^2}{\partial x^2} \sigma^2(x) Q_s(x) \right) \dfrac{P_s(x)}{Q_s(x)}\,dx - J_{\sigma^2}(X_s || Y_s). 
\end{align*}
Now, by Lemma \ref{lem:Ito} we note that
\begin{align*}
\dfrac{d}{dt} K(X_t || Y_t) &= \dfrac{d}{dt} \E \left[\ln \dfrac{P_t(X_t)}{Q_t(X_t)} \right] \\
							&= \E [f_t(t,X_t)] + Ht^{2H-1} \left(\E [f_x(t,X_t)\sigma'(X_t)\sigma(X_t)] + \E [f_{xx}(t,X_t)\sigma^2(X_t)]\right) \\
							&= - Ht^{2H-1}J_{\sigma^2} (X_t || Y_t),
\end{align*}
where the last equality follows from Lemma \ref{lem:FPe}. 

\subsubsection{Proof of Theorem \ref{thm:derivegeneralized}}

Let $t\geq0$ be such that $s^{H} = \sqrt{t}$ and denote $Z$ to follow the standard normal distribution. Using chain rule we have
\begin{align*}
\dfrac{d}{ds} h(X_s) &= \dfrac{d}{ds} h(X_0 + s^H Z) \\
&= \dfrac{d}{ds} h(X_0 + \sqrt{t} Z) \\
&= \dfrac{d}{dt} h(X_0 + \sqrt{t} Z) \dfrac{dt}{ds} \\
&= \dfrac{1}{2} J_1(X_0 + \sqrt{t} Z) 2H s^{2H-1} \\
&= Hs^{2H-1} J_{1}(X_s), 
\end{align*}
where the fourth equality follows from the classical De Bruijn's identity (see e.g. \cite{CJ06}). In particular when $X_0$ is Gaussian, then $X_s$ is also Gaussian with mean $0$ and variance $\sigma_0^2 + s^{2H}$. Since for normal distribution the Fisher information is the reciprocal of the variance, we have
\begin{align*}
\dfrac{d}{ds} h(X_s) = Hs^{2H-1} J_{1}(X_s) = \dfrac{Hs^{2H-1}}{\mathrm{Var}(X_s)} = \dfrac{Hs^{2H-1}}{\sigma_0^2 + s^{2H}}.
\end{align*}

\section{Applications}\label{sec:applications}

In this section, we present two applications of the generalized De Bruijn's identity. In the first application in Section \ref{subsec:Stein}, we demonstrate its equivalence with the Stein's identity for Gaussian distribution, while in Section \ref{subsec:entropypower}, we prove the convexity or the concavity of entropy power, which depends on the Hurst parameter $H$. Throughout this section, we assume that the channel is of the form
\begin{align*}
 	X_t = X_0 + B^H_t,
\end{align*}
where the initial value $X_0$ is independent of the fBm and the Hurst parameter $H \in (0,1)$.

\subsection{Equivalence of the generalized De Bruijn's identity and Stein's identity for normal distribution}\label{subsec:Stein}

It is known that the classical De Bruijn's identity is equvialent to the Stein's identity for normal distribution as well as the heat equation identity, provided that the initial noise $X_0$ is Gaussian, see e.g. \cite{BDHS06,PSQ12}. These identities are equivalent in the sense that one can derive the others using any one of them. It is therefore natural for us to guess that the same equivalence also holds for the proposed generalized De Bruijn's identity. To this end, let us recall the classical Stein's identity for normal distribution. Writing $Y$ to be the normal distribution with mean $\mu$ and variance $\sigma^2$, the \textbf{Stein's identity} is given by
\begin{align}\label{eq:Stein}
	\E[r(Y)(Y - \mu)] = \sigma^2 \E \left[\dfrac{d}{dy}r(Y)\right],
\end{align}
where $r$ is a differentiable function such that the above expectations exist. In the following result, we prove that the generalized De Bruijn's identity presented in Theorem \ref{thm:derivegeneralized} is equivalent to the Stein's identity, 

\begin{theorem}[Equivalence of the generalized De Bruijn's identity \eqref{formula2} and Stein's identity]\label{thm:eqdbistein}
	Consider the channel $X = (X_t)_{t \geq 0}$ modelled by equation \eqref{eq:channel} with Hurst parameter $H \in (0,1)$ and initial Gaussian $X_0$ independent of the fBm. Then the generalized De Bruijn's identity \eqref{formula2} is equivalent to the Stein's identity \eqref{eq:Stein}.
\end{theorem}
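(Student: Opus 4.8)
The plan is to exploit that, because $X_0$ is Gaussian and independent of the fBm, the channel output $X_t = X_0 + B^H_t$ is itself Gaussian, say $X_t \sim N(\mu, v(t))$ with $v(t) = \sigma_0^2 + t^{2H}$ (writing $\mu,\sigma_0^2$ for the mean and variance of $X_0$). The whole statement thus reduces to facts about a one-parameter family of Gaussian laws, and the natural device is the deterministic time change $\tau = t^{2H}$ already used in the proof of Theorem~\ref{thm:derivegeneralized}. Since $B^H_t \sim N(0,t^{2H})$ and $\sqrt{\tau}\,Z \sim N(0,\tau)$, the output $X_t$ has the same law as the classical Gaussian channel $\tilde X_\tau := X_0 + \sqrt{\tau}\,Z$ evaluated at $\tau = t^{2H}$, so $h(X_t) = h(\tilde X_{t^{2H}})$ and $J_1(X_t) = J_1(\tilde X_{t^{2H}})$.

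First I would show that the generalized identity \eqref{formula2} is equivalent to the classical De Bruijn identity $\frac{d}{d\tau}h(\tilde X_\tau) = \frac12 J_1(\tilde X_\tau)$. The map $t \mapsto \tau = t^{2H}$ is a $C^1$ bijection of $(0,\infty)$ onto itself with nonvanishing derivative $\frac{d\tau}{dt} = 2Ht^{2H-1}$, so by the chain rule $\frac{d}{dt}h(X_t) = 2Ht^{2H-1}\,\frac{d}{d\tau}h(\tilde X_\tau)\big|_{\tau=t^{2H}}$. Inserting the classical identity yields \eqref{formula2}; conversely \eqref{formula2} forces $\frac{d}{d\tau}h(\tilde X_\tau) = \frac12 J_1(\tilde X_\tau)$ at every $\tau = t^{2H}$, and surjectivity of the time change upgrades this to all $\tau>0$. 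This is exactly the computation already carried out for Theorem~\ref{thm:derivegeneralized}, now read as an equivalence rather than a one-way implication. It then remains to invoke the classical equivalence between the De Bruijn identity and Stein's identity \eqref{eq:Stein} for Gaussian laws from \cite{BDHS06,PSQ12}: for each fixed $\tau$ the law $N(\mu,\sigma_0^2+\tau)$ of $\tilde X_\tau$ satisfies the classical De Bruijn identity iff it satisfies \eqref{eq:Stein}. Chaining the two equivalences gives the claim.

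For self-containedness I would also record the direct verification in both directions. Assuming Stein's identity, taking $r(y)=y-\mu$ gives $\E[(X_t-\mu)^2]=v(t)$, whence $h(X_t) = \tfrac12\ln(2\pi e\,v(t))$, and since the Gaussian score is $\partial_x\ln p_t(x) = -(x-\mu)/v(t)$ one obtains $J_1(X_t)=1/v(t)$; differentiating the entropy gives $\frac{d}{dt}h(X_t) = Ht^{2H-1}/v(t) = Ht^{2H-1}J_1(X_t)$, which is \eqref{formula2}. For the reverse direction I would pivot through the generalized heat equation $\partial_t p_t = Ht^{2H-1}\partial_{xx}p_t$ satisfied by the Gaussian density $p_t$ (a direct computation, consistent with Lemma~\ref{lem:FPe} when $\sigma\equiv 1$), integrate by parts to recover the linear-score relation $\partial_x p_t = -\frac{x-\mu}{v(t)}\,p_t$, and then a further integration by parts against an arbitrary test function $r$ produces \eqref{eq:Stein}.

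The main obstacle is the direction \eqref{formula2} $\Rightarrow$ \eqref{eq:Stein}, and more fundamentally the precise sense in which a scalar identity (one equation at each time $t$) can be "equivalent" to Stein's identity (an entire family of identities indexed by test functions $r$). The resolution is that \eqref{formula2} is assumed to hold for all $t>0$: the one-parameter family of scalar identities, together with the Gaussian heat-equation structure, carries exactly the same information as the functional identity \eqref{eq:Stein}. The time-change reduction makes this transparent by offloading the delicate bookkeeping onto the already-known classical equivalence, so that the only genuinely new ingredient is the elementary chain-rule correspondence of the second paragraph.
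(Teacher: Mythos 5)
Your proof is correct and, at its core, follows the same route as the paper's: both reduce the claim to the classical De Bruijn--Stein equivalence for Gaussian laws (citing \cite{BDHS06,PSQ12}) and connect the generalized identity \eqref{formula2} to the classical one through the time change $\tau = t^{2H}$ that underlies Theorem~\ref{thm:derivegeneralized}. Where you genuinely differ is in the direction ``\eqref{formula2} $\Rightarrow$ classical De Bruijn'': the paper obtains the classical identity by simply setting $H = 1/2$ in \eqref{formula2}, which implicitly reads the generalized identity as a statement available for \emph{every} Hurst parameter, whereas you keep $H \in (0,1)$ fixed and invert the bijection $t \mapsto t^{2H}$ of $(0,\infty)$, using its surjectivity to recover $\frac{d}{d\tau} h(\tilde X_\tau) = \frac{1}{2} J_1(\tilde X_\tau)$ for all $\tau > 0$. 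Your version is the sharper reading: it proves the equivalence for each fixed $H$, which is what the theorem statement (a single given $H$) actually asserts, and it costs nothing beyond the chain rule. One caveat on your supplementary ``direct verification'': in the reverse direction, \eqref{formula2} is never actually invoked --- the generalized heat equation and the linear-score relation $\partial_x p_t = -\frac{x-\mu}{v(t)}\, p_t$ are verified directly from the Gaussian form of the density, so that paragraph is a consistency check rather than a derivation of \eqref{eq:Stein} \emph{from} \eqref{formula2}. This does not affect your main argument, which is complete as it stands.
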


\begin{rk}[Equivalence of the generalized De Bruijn's identity \eqref{formula} and Stein's identity]
While Theorem \ref{thm:eqdbistein} is established for channel \eqref{eq:channel}, it is natural to consider if the same equivalence holds between \eqref{formula} and Stein's identity for channel \eqref{sde}. One direction is straightforward: with \eqref{formula} we have the classical De Bruijn's identity by taking the diffusion coefficent to be $\sigma(x) = 1$, and so we can derive the Stein's identity. However, for the opposite direction, we did not manage to prove \eqref{formula} via the classical De Bruijn's identity in the presence of general diffusion coefficient $\sigma(x)$. The trick employed in proving Theorem \ref{thm:derivegeneralized}, where the diffusion coefficient is simply $1$, does not seem to carry over to this setting. If this can be proved by other means, then the equivalence could be established.
\end{rk}

\begin{proof}
	If we have the Stein's identity, then we can derive the classical De Bruijn's identity \cite{PSQ12}, and so we have the generalized De Bruijn's identity by Theorem \ref{thm:derivegeneralized}. For the other direction, if we have the generalized De Bruijn's identity, then we can derive the classical De Bruijn's identity by taking $H = 1/2$, and from it we can derive the Stein's identity by \cite{PSQ12}.
\end{proof}

\subsection{Convexity/Concavity of the entropy power}\label{subsec:entropypower}

Recall that the \textbf{entropy power} of a random variable $X$ is defined to be
\begin{align}\label{def:entropypower}
	N(X) := \dfrac{1}{2\pi e} e^{2h(X)}.
\end{align}
In the classical setting when the channel $X_t$ is of the form \eqref{eq:channel} with $X_0$ being an arbitrary initial noise, \cite{Costa85,Dembo89} prove that the entropy power of $N(X_t)$ is concave in time $t$. Recently in \cite{KA16} the authors extend the concavity of entropy power to the dependent case where the dependency structure between the initial value $X_0$ and the channel $X_t$ is specified by Archimedean and Gaussian copulas. In our case, interestingly convexity/concavity of the entropy power depends on the Hurst parameter $H$:

\begin{theorem}[Convexity/Concavity of the entropy power]\label{thm:convexconcave}
	Consider the channel $X = (X_t)_{t \geq 0}$ modelled by equation \eqref{eq:channel} with Hurst parameter $H \in (0,1)$, initial value $X_0$ independent of the fBm and has a finite second moment. We have 
	\begin{align*}
		\dfrac{d^2}{d t^2} N(X_t) &= 2 N(X_t) \left(2H^2 t^{4H-2} J_1(X_t)^2 + H(2H-1)t^{2H-2}J_1(X_t) + Ht^{2H-1} \dfrac{d}{d t} J_1(X_t)\right) \\
		&= 2 N(X_t) g(t,H,X_t),
	\end{align*}
	where $g(t,H,X_t) := 2H^2 t^{4H-2} J_1(X_t)^2 + H(2H-1)t^{2H-2}J_1(X_t) + Ht^{2H-1} \dfrac{d}{d t} J_1(X_t)$. Consequently,
	\begin{align*}
		N(X_t) &=\begin{cases} \text{convex in t} &\mbox{if } g(t,H,X_t) > 0, \\ 
		\text{concave in t} & \mbox{if } g(t,H,X_t) \leq 0. \end{cases}
	\end{align*}
	In particular, when $X_0$ is a Gaussian distribution with mean $0$ and variance $\sigma_0^2$, we then have $g(t,H,X_t) = H(2H-1)t^{2H-2}J_1(X_t)$ and
	\begin{align*}
	N(X_t) &=\begin{cases} \text{convex in t} &\mbox{if } H \in (1/2,1), \\ 
	\text{concave in t} & \mbox{if } H \in (0,1/2]. \end{cases}
	\end{align*}
\end{theorem}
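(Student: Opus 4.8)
\section*{Proof proposal}

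The plan is to differentiate the entropy power $N(X_t)=\frac{1}{2\pi e}e^{2h(X_t)}$ directly, feeding in the first-order entropy flow supplied by Theorem \ref{thm:derivegeneralized}. Since the channel is of the form \eqref{eq:channel}, that theorem gives $\frac{d}{dt}h(X_t)=Ht^{2H-1}J_1(X_t)$, and the chain rule then yields
\begin{align*}
\frac{d}{dt}N(X_t) = 2N(X_t)\,\frac{d}{dt}h(X_t) = 2N(X_t)\,Ht^{2H-1}J_1(X_t).
\end{align*}
The strategy is simply to differentiate this expression once more and organize the result.

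For the second derivative I would apply the product rule to $2N(X_t)\cdot Ht^{2H-1}\cdot J_1(X_t)$, treating it as a product of three factors. Differentiating the factor $N(X_t)$ reproduces $\frac{d}{dt}N(X_t)=2N(X_t)Ht^{2H-1}J_1(X_t)$ and hence contributes the quadratic term $4N(X_t)H^2t^{4H-2}J_1(X_t)^2$; differentiating the power $t^{2H-1}$ produces $2N(X_t)H(2H-1)t^{2H-2}J_1(X_t)$; and differentiating $J_1(X_t)$ produces $2N(X_t)Ht^{2H-1}\frac{d}{dt}J_1(X_t)$. Collecting these three contributions and factoring out $2N(X_t)$ gives exactly the claimed identity with $g(t,H,X_t)$ as defined. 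Because $N(X_t)=\frac{1}{2\pi e}e^{2h(X_t)}>0$ for all $t$, the sign of $\frac{d^2}{dt^2}N(X_t)$ coincides with the sign of $g(t,H,X_t)$, which immediately gives the stated convexity/concavity dichotomy.

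For the Gaussian specialization I would use that $X_t=X_0+B^H_t$ is again Gaussian with mean $0$ and variance $\sigma_0^2+t^{2H}$ (exactly as in the proof of Theorem \ref{thm:derivegeneralized}), so that $J_1(X_t)=(\sigma_0^2+t^{2H})^{-1}$ and consequently $\frac{d}{dt}J_1(X_t)=-2Ht^{2H-1}(\sigma_0^2+t^{2H})^{-2}$. Substituting these into $g$, the key observation — and the only genuinely satisfying step in the computation — is that the quadratic term $2H^2t^{4H-2}J_1(X_t)^2=2H^2t^{4H-2}(\sigma_0^2+t^{2H})^{-2}$ exactly cancels the final term $Ht^{2H-1}\frac{d}{dt}J_1(X_t)=-2H^2t^{4H-2}(\sigma_0^2+t^{2H})^{-2}$. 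What survives is precisely $g(t,H,X_t)=H(2H-1)t^{2H-2}J_1(X_t)$. Since $H\in(0,1)$, $t>0$ and $J_1(X_t)>0$, the sign of this quantity is governed entirely by the factor $2H-1$, yielding convexity for $H\in(1/2,1)$ and concavity for $H\in(0,1/2]$.

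The computation is essentially mechanical once Theorem \ref{thm:derivegeneralized} is in hand, so I do not anticipate a deep obstacle. The one technical point worth verifying is that $t\mapsto h(X_t)$ is genuinely twice differentiable — equivalently, that $t\mapsto J_1(X_t)$ is differentiable and that the interchange of differentiation and integration implicit in writing $\frac{d}{dt}J_1(X_t)$ is legitimate under the finite second moment hypothesis on $X_0$. In the Gaussian case this is immediate from the closed form of $J_1(X_t)$; in the general case it would follow from the smoothness and growth assumptions underlying the earlier results, and I would flag this rather than belabor it.
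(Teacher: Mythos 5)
Your proposal is correct and takes essentially the same route as the paper: both differentiate $N(X_t)=\frac{1}{2\pi e}e^{2h(X_t)}$ twice, substitute the entropy flow $\frac{d}{dt}h(X_t)=Ht^{2H-1}J_1(X_t)$ from Theorem \ref{thm:derivegeneralized}, and in the Gaussian case use $J_1(X_t)=(\sigma_0^2+t^{2H})^{-1}$ and $\frac{d}{dt}J_1(X_t)=-2Ht^{2H-1}J_1(X_t)^2$ to cancel the quadratic term and isolate $H(2H-1)t^{2H-2}J_1(X_t)$. The only cosmetic difference is that the paper organizes the second derivative as $2N(X_t)\bigl(2(\frac{d}{dt}h(X_t))^2+\frac{d^2}{dt^2}h(X_t)\bigr)$ before substituting, whereas you apply the product rule directly to the three-factor expression; the resulting terms are identical.
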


\begin{rk}
	In the special case when $H = 1/2$ and $X_0$ is Gaussian, we retrieve the classical result that $N(X_t)$ is linear and hence concave (or convex) in $t$.
\end{rk}

\begin{rk}[The role of time parameter $t$]
	In Theorem \ref{thm:convexconcave}, we see that there are factors such as $t^{2H-1}$ or $t^{4H-2}$ appearing in the function $g(t,H,X_t)$. While these terms equal to $1$ in the classical $H = 1/2$ case, as we shall see in the proof these terms play an important and interesting role in determining the second-order behaviour of the entropy power in the general fBm case. Note that these terms all come from the factor $t^{2H-1}$ in front of the Fisher information $J_1$ in \eqref{formula2}.
\end{rk}

\begin{rk}[On establishing the convexity/concavity of the entropy power in model \eqref{sde}]
	While Theorem \ref{thm:convexconcave} is stated for model \eqref{eq:channel}, we can in fact state a similar result for model \eqref{sde} using Theorem \ref{thm:dbi}. However, there will not be a clear cut distinction between the two cases $H \leq 1/2$ and $H > 1/2$ as in Theorem \ref{thm:convexconcave}; it will also depend on the derivatives of the diffusion coefficient $\sigma(x)$, which may not be tractable in general.
\end{rk}

\begin{proof}
	Using the definition of the entropy power \eqref{def:entropypower}, we have	\begin{align*}
		\dfrac{d^2}{d t^2} N(X_t) &= {\color{blue} \dfrac{d}{dt}\left(2 N(X_t) \dfrac{d}{dt} h(X_t)\right)} \\
											&= 2N(X_t) \left( 2 \left(\dfrac{d}{dt} h(X_t)\right)^2 + \dfrac{d^2}{dt^2} h(X_t) \right) \\
											&= 2 N(X_t) \left(2H^2 t^{4H-2} J_1(X_t)^2 + H(2H-1)t^{2H-2}J_1(X_t) + Ht^{2H-1} \dfrac{d}{d t} J_1(X_t)\right) \\
											&= 2 N(X_t) g(t,H,X_t), 
	\end{align*}
	where we make use of the generalized De Bruijn's identity \eqref{formula2} in the third equality. Since $N(X_t) \geq 0$, convexity/concavity of $N(X_t)$ thus depends on the sign of the function $g(t,H,X_t)$. In particular, when $X_0$ is Gaussian with mean $0$ and variance $\sigma_0^2$, we have 
	\begin{align*}
		J_1(X_t) &= \dfrac{1}{\sigma_0^2 + t^{2H}}, \\
		\dfrac{d}{dt} J_1(X_t) &= \dfrac{- 2H t^{2H-1}}{(\sigma_0^2 + t^{2H})^2} = - 2H t^{2H-1} J_1(X_t)^2, \\
		g(t,H,X_t) &= H(2H-1) t^{2H-2} J_1(X_t) \\
				   &= \begin{cases} > 0 &\mbox{if } H \in (1/2,1), \\ 
				   \leq 0 & \mbox{if } H \in (0,1/2]. \end{cases}
	\end{align*}
\end{proof} 

Theorem \ref{thm:convexconcave} implies that, for channel of the form \eqref{eq:channel} and Gaussian distributed $X_0$, for $t \in [0,1]$, we have 
\begin{align*}
N(X_t) &\begin{cases} \leq t N(X_0) + (1-t) N(X_1) &\mbox{if } H \in (1/2,1), \\ 
\geq t N(X_0) + (1-t) N(X_1) & \mbox{if } H \in (0,1/2]. \end{cases}
\end{align*}
One application of the above equation lies in determining the so-called capacity region in a Gaussian interference channel; see the paper \cite{Costa85b}. It turns out that the concavity of entropy power is a crucial step in the proof of Theorem 2 in \cite{Costa85b}. With our Theorem \ref{thm:convexconcave}, it seems possible to study the capacity region in an interference channel with fBm noise and generalize the result to $H \in (0,1/2]$. We leave this as one of our future research directions.

\section{Conclusion}\label{sec:conclusion}

In this paper, we present the generalized De Bruijn's identity for the channel driven by fractional Brownian motion with Hurst parameter $H\in(0,1)$. Compared with the classical Brownian motion, i.e., $H=\frac12$, in our setting, the term $t^{2H-1}$ in general does not degenerate unless $H=\frac12$, and thus plays an essential role in the derivation of the identity. Consequently, we also investigate its equivalence with the Stein's identity and study the second-order behaviour of the entropy power. We hope that this paper can open new doors in analyzing the De Bruijn's identity in a more general context to model phenomenon such as self-similarity and long-range dependency.

There are at least two future research directions.  First, as mentioned in Section \ref{subsec:entropypower}, we can study the capacity region in an interference channel with fBm noise, where our convexity/concavity result should be an important step in the analysis. Second, we can consider more general channel driven by stochastic processes such as the Gaussian Volterra process \cite{V17}. Members of this broad family include fBm as well as the Riemann-Liouville process. This shall provide a unified framework in studying the De Bruijn's identity and its applications for channels driven by the more general Gaussian processes.\\

\noindent \textbf{Acknowledgements}.
The authors would like to thank the associate editor and the referee for constructive comments that improve the quality of the manuscript. Michael Choi acknowledges the support from the Chinese University of Hong Kong, Shenzhen grant PF01001143 and AIRS - Shenzhen Institute of Artificial Intelligence and Robotics for Society Project 2019-INT002.

\bibliographystyle{abbrvnat}
\bibliography{thesis}

\end{document}